\newcommand{\Assouad}{\dim_{\mathrm{A}}}
\renewcommand{\epsilon}{\varepsilon}
\newtheorem{thm}{Theorem}[section]
\newtheorem{lma}[thm]{Lemma}
\newtheorem{cor}[thm]{Corollary}
\newtheorem{defn}[thm]{Definition}
\newtheorem{ques}[thm]{Question}
\begin{document}

\title[Sets which avoid arithmetic progressions]{Dimensions of sets which uniformly avoid \\ arithmetic progressions}

\author[J. M. Fraser]{Jonathan M. Fraser}
\address{Jonathan M. Fraser\\
School of Mathematics \& Statistics\\University of St Andrews\\ St Andrews\\ KY16 9SS\\ UK  }
\curraddr{}
\email{jmf32@st-andrews.ac.uk}
\thanks{JMF is financially supported by a Leverhulme Research Fellowship (RF-2016-500).}

\author[K. Saito]{ Kota Saito }
\address{Kota Saito\\
Graduate School of Mathematics\\ Nagoya University\\ Furocho\\ Chikusa-ku\\ Nagoya\\ 464-8602\\ Japan }
\curraddr{}
\email{m17013b@math.nagoya-u.ac.jp}
\thanks{}

\author[H. Yu]{Han Yu}
\address{Han Yu\\
School of Mathematics \& Statistics\\University of St Andrews\\ St Andrews\\ KY16 9SS\\ UK  }
\curraddr{}
\email{hy25@st-andrews.ac.uk}
\thanks{HY is financially supported by the University of St Andrews.\\  \indent The authors thank Neal Bez for discussions which led to this collaboration.}

\subjclass[2010]{Primary: 11B25, 28A80.}

\keywords{Arithmetic progressions, Assouad dimension, Hausdorff dimension, discrete Kakeya problem.}

\date{}

\dedicatory{}
\begin{abstract}
We provide estimates for the dimensions of sets in $\mathbb{R}$ which uniformly avoid finite arithmetic progressions.  More precisely, we say $F$ uniformly avoids arithmetic progressions of length $k \geq 3$ if there is an $\epsilon>0$ such that one cannot find an arithmetic progression of length $k$ and gap length $\Delta>0$ inside the $\epsilon \Delta$ neighbourhood of $F$.  Our main result is an explicit upper bound for the Assouad (and thus Hausdorff) dimension of such sets in terms of $k$ and $\epsilon$.  In the other direction, we provide examples of sets which uniformly avoid arithmetic progressions of a given  length but still have relatively large Hausdorff dimension.

We also consider higher dimensional analogues of these problems, where arithmetic progressions are replaced with arithmetic patches lying in a hyperplane.  As a consequence we obtain a discretised version of a `reverse Kakeya problem': we show that if the dimension of a set in $\mathbb{R}^d$ is sufficiently large, then it closely approximates arithmetic progressions in every direction.
\end{abstract}

\maketitle

\section{Almost arithmetic progressions and dimension}

Arithmetic progressions are fundamental objects across mathematics and conditions which either force them to exist (or not exist) within a given set are of particular interest. For example, Szemer\'{e}di's seminal theorem \cite{szemeredi} states  that if $A\subset \mathbb{N}$ has positive upper density, then $A$ contains arbitrarily long arithmetic progressions. We say a set  $\{a_i\}_{i=0}^{k-1} \subset \mathbb{R}$ is an \emph{arithmetic progression (AP) of length $k$} if there exists $\Delta>0$ such that 
\[
a_i=a_0+i\Delta,
\]
for $i=1,2,\dots ,k-1$.  We say $\Delta$ is the \emph{gap length} of the arithmetic progression.    We are primarily interested in sets which \emph{uniformly avoid} arithmetic progressions and for this reason it is useful to introduce a weaker notion of `almost arithmetic progressions'.  In particular, given $\varepsilon \geq 0$ we say that $\{b_i\}_{i=0}^{k-1}\subset \mathbb{R}$ is a $(k,\varepsilon)$-AP if there exists an arithmetic progression $\{a_i\}_{i=0}^{k-1}$ with gap length $\Delta>0$ such that
\[
\lvert a_i - b_i \rvert \leq \varepsilon \Delta
\]
for all $i=0,2,\dots , k-1$.  Thus there is an arithmetic progression of length $k$ and gap length $\Delta>0$ inside the closed $\epsilon \Delta$ neighbourhood of a $(k,\varepsilon)$-AP.  We think of a set $F \subset \mathbb{R}$ as \emph{uniformly avoiding arithmetic progressions of length $k$} if, for some $\varepsilon>0$,  it does not contain any $(k,\varepsilon)$-APs.  Note that $(k,0)$-APs are simply the usual arithmetic progressions of length $k$.

The goal of this paper is to  quantify how `small' a set must be if it uniformly avoids arithmetic progressions.  We do this by providing explicit upper bounds for the dimensions of such sets.  There are numerous related notions of dimension appropriate for our purpose, but since the Assouad dimension is the biggest amongst the standard notions, estimating it from above will provide the strongest results.  We briefly recall the definition, but refer the reader to \cite{Fraser, Robinson} for more details.  

 For a non-empty bounded set $E \subset \mathbb{R}^d$ $(d \in \mathbb{N})$ and $r>0$, let $N (E, r)$ be the smallest number of open sets with diameter less than or equal to $r$ required to cover $E$.  The \emph{Assouad dimension} of a non-empty set $F\subseteq \mathbb{R}^d$ is defined by
\begin{eqnarray*}
\dim_\text{A} F & = &  \inf \Bigg\{ \  s \geq 0 \  : \ \text{      $ (\exists \, C>0)$ $(\forall \, R>0)$ $(\forall \, r \in (0,R) )$ $(\forall \, x \in F )$ } \\ 
&\,& \hspace{45mm} \text{ $N\big( B(x,R) \cap F , r\big) \ \leq \ C \bigg(\frac{R}{r}\bigg)^s$ } \Bigg\}
\end{eqnarray*}
where $B(x,R)$ denotes the closed ball centred at $x$ with radius $R$. It is well-known that the Assouad dimension is always an upper bound for the Hausdorff dimension, $\dim_\mathrm{H}$,  and (for bounded sets) the upper box dimension, $\overline{\dim}_\mathrm{B}$. We refer the reader to \cite{falconer} for more background on Hausdorff and box dimension.

Some connections between dimension and arithmetic progressions or almost arithmetic progressions are already known.  For example, \L aba and  Pramanik \cite{laba} showed that sets with Hausdorff dimension sufficiently close to 1 which support measures with certain Fourier decay necessarily contain arithmetic progressions of length 3 and Carnovale \cite{carnovale} extended this to longer arithmetic progressions. In the other direction, Shmerkin \cite{shmerkin} constructed examples of Salem sets $F \subset [0,1]$ of any dimension which do not contain arithmetic progressions of length 3.   Fraser and Yu \cite{FraserYu} proved that $F\subset\mathbb{R}$ has Assouad dimension $1$ if and only if  for all $k \geq 3$ and all $\varepsilon \in (0,1)$,  $F$ contains a $(k,\varepsilon)$-AP. As a corollary, they proved that if $F$ is a set of positive integers whose reciprocals form a divergent series, then for all $k \geq 3$ and all $\varepsilon \in (0,1)$,  $F$ contains a $(k,\varepsilon)$-AP.   The famous Erd\H{o}s-Tur\'{a}n conjecture on arithmetic progressions is that one can set $\epsilon=0$ here.

\section{Results for subsets of the line}

We can now state our main theorem on dimensions of sets which uniformly avoid arithmetic progressions, although we obtain a more general higher dimensional version of this result later, see Theorem \ref{higherdim}.  Here and throughout we write $\lceil x\rceil$ to mean the smallest integer greater than or equal to a real number $x \geq 0$.

\begin{thm}\label{Th1}
	Let $F\subset\mathbb{R}$ and fix an integer $k \geq 3$ and $\varepsilon \in (0,1)$.  If $F$ does not contain any $(k,\varepsilon)$-APs, then
	\[
	 \Assouad F\leq 1+\frac{\log (1-1/k)}{\log k \lceil 1/(2\varepsilon)\rceil}.
	\]
\end{thm}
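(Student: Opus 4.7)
The plan is to prove the equivalent (and cleaner) bound
\[
\dim_{\mathrm{A}} F \leq \frac{\log((k-1)M)}{\log(kM)}, \qquad M := \lceil 1/(2\varepsilon)\rceil,
\]
which equals $1 + \log(1-1/k)/\log(kM)$ after a routine algebraic rearrangement. The strategy is to iterate a single scale-invariant covering estimate: for any closed interval $I \subset \mathbb{R}$ of length $\ell$, if we partition $I$ into $kM$ consecutive equal subintervals $I_0, \ldots, I_{kM-1}$ of length $\ell/(kM)$ with centres $c_0, \ldots, c_{kM-1}$, then at most $(k-1)M$ of them can intersect $F$.

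To establish this estimate, I would group the indices by residue modulo $M$: for each $j \in \{0, \ldots, M-1\}$, the $k$ centres $c_j, c_{j+M}, \ldots, c_{j+(k-1)M}$ form an arithmetic progression with gap $\Delta := \ell/k$. If every subinterval in this residue class met $F$, picking $b_i \in I_{j+iM} \cap F$ would produce a set with
\[
|b_i - c_{j+iM}| \leq \frac{\ell}{2kM} = \frac{\Delta}{2M} \leq \varepsilon \Delta,
\]
using $M \geq 1/(2\varepsilon)$. This is a $(k,\varepsilon)$-AP inside $F$, contradicting the hypothesis, so at least one subinterval per residue class must miss $F$, leaving at most $kM - M = (k-1)M$ subintervals that can intersect $F$.

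To conclude, I would iterate: given $x \in F$ and $R > 0$, apply the estimate to $[x-R, x+R]$, then within each of the $(k-1)M$ surviving subintervals, and so on. After $n$ steps $F \cap B(x,R)$ is covered by $((k-1)M)^n$ closed intervals of diameter $2R/(kM)^n$, each of which may be thickened to an open set of arbitrarily close diameter to match the definition of $N(\cdot, r)$. For general $r \in (0, R)$, choosing the smallest $n$ with $2R/(kM)^n \leq r$ yields
\[
N\bigl(F \cap B(x,R),\, r\bigr) \leq ((k-1)M)^n \leq C \biggl(\frac{R}{r}\biggr)^{\log((k-1)M)/\log(kM)}
\]
for some constant $C = C(k,\varepsilon)$, which is precisely the Assouad dimension bound sought. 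The main obstacle is discovering the residue-class pigeonhole step: a naive AP-avoidance argument that forbids only a single configuration gives only the much weaker bound $kM - 1$, whereas the residue count captures $M$ independent obstructions and produces the sharper count $(k-1)M$, which is exactly what is required to obtain the exponent claimed in the theorem.
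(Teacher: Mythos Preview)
Your proof is correct and follows essentially the same route as the paper: partition an interval into $kM$ equal pieces, group by residue modulo $M$, use the centres of each residue class as a genuine $k$-term AP with gap $\ell/k$, and conclude that at least one piece per residue class misses $F$; then iterate. The only cosmetic difference is that you work directly with $M = \lceil 1/(2\varepsilon)\rceil$ from the outset, whereas the paper first assumes $1/(2\varepsilon)\in\mathbb{Z}$ and reduces the general case to this at the end---your presentation is slightly cleaner in this respect.
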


We delay the proof of Theorem \ref{Th1} until Section \ref{proof1}.   Since the Assouad dimension is an upper bound for both the Hausdorff and box dimensions, this result also gives bounds on these dimensions.  Also, the converse of this theorem provides a useful check to prove the existence of approximations to certain APs.

Theorem \ref{Th1} asserts that if, for some $k \geq 3$ and $\varepsilon \in (0,1)$, a set $F \subset \mathbb{R}$ does not contain any $(k,\varepsilon)$-APs, then $\Assouad F < 1$.  This is not  \emph{not} true when $\varepsilon=0$ due to a result of Keleti \cite{keleti}. This result says that for every countable set $B\subset \mathbb{R}$ there exists a compact set with Hausdorff dimension 1 that intersects any similar copy of $B$ in at most two points. Therefore, for every $k\geq 3$ there exists a set with  full Hausdorff (and therefore Assouad) dimension that does not contain any $(k,0)$-APs.

The precise quantity we are interested in estimating in this paper is
\[
  \sup \{ \,  \dim F \, : \, \textup{$F \subset \mathbb{R}$ does not contain any $(k, \epsilon)$-APs} \}
\]
in terms of $\epsilon$ and $k$ where $\dim$ is either the Hausdorff or Assouad dimension.    Theorem  \ref{Th1} provides an upper bound and  our next theorem provides a lower bound.   See Section \ref{discussion} for more discussion on the sharpness of these bounds.

\begin{thm}\label{Th3}
	Fix an integer $k\geq 3$ and $\varepsilon \in (0,1)$ satisfying $\epsilon< (k-2)/4$. There exists a set $F \subset \mathbb{R}$ which does not contain any $(k,\varepsilon)$-APs  and
	\[
	\Assouad F = \dim_\textup{H}\:F =  \frac{\log 2}{\log \frac{2k-2-4\epsilon}{k-2-4 \epsilon}}.
	\]
\end{thm}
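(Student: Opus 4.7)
The proposed example is the self-similar two-map Cantor set at the critical contraction ratio, with its countable set of cylinder endpoints removed. Set $r := (k-2-4\varepsilon)/(2k-2-4\varepsilon) \in (0, 1/2)$ (using $\varepsilon < (k-2)/4$), let $F_0 \subset [0,1]$ be the attractor of $\{f_1(x) = rx,\ f_2(x) = rx + (1-r)\}$, let $E$ be the countable set of points in $F_0$ with eventually constant symbolic address (equivalently, the endpoints of all cylinders $f_{\sigma_1} \circ \cdots \circ f_{\sigma_n}([0,1])$), and put $F := F_0 \setminus E$. The open set condition gives $\Haus F_0 = \Assouad F_0 = \log 2/\log(1/r) = \log 2/\log \frac{2k-2-4\varepsilon}{k-2-4\varepsilon}$; removing a countable set preserves Hausdorff dimension and cannot increase Assouad dimension, so $F$ has both dimensions equal to this value.

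To show $F$ avoids $(k,\varepsilon)$-APs, suppose for contradiction $\{b_i\}_{i=0}^{k-1} \subset F$ is one with approximating gap $\Delta > 0$. Since the $b_i$ are distinct, let $n \ge 0$ be largest with all $b_i$ in a single level-$n$ cylinder $I_n$; by self-similarity (which preserves $F$ because both $F_0$ and $E$ are IFS-invariant) we may take $n = 0$, so $I_n = [0,1]$ and the two level-one cylinders are $[0, r]$ and $[1-r, 1]$. Maximality of $n$ gives some $j \in \{0, \ldots, k-2\}$ with $b_0, \ldots, b_j \in [0, r]$ and $b_{j+1}, \ldots, b_{k-1} \in [1-r, 1]$. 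From $b_{k-1} - b_0 \le 1$ together with $b_{k-1} - b_0 \ge (k-1-2\varepsilon)\Delta$ we obtain $\Delta \le 1/(k-1-2\varepsilon)$; from the cylinder gap $b_{j+1} - b_j \ge 1 - 2r$ together with $b_{j+1} - b_j \le (1+2\varepsilon)\Delta$ we obtain $\Delta \ge (1-2r)/(1+2\varepsilon)$. The value of $r$ is engineered precisely so these two thresholds coincide, forcing $\Delta = 1/(k-1-2\varepsilon)$ and both inequalities to be equalities.

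Equality in the containment bound combined with $b_i \in [0,1]$ forces $b_0 = 0$, which lies in $E$ and hence not in $F$, a contradiction. The main obstacle is verifying the algebraic coincidence of the two thresholds at this specific $r$; the necessity of excising endpoints is already visible at the top scale, since at the critical $r$ the quadruple $\{0, r, 1-r, 1\}$ is itself a $(k,\varepsilon)$-AP inside the full attractor $F_0$ when $k = 4$, so any construction retaining all cylinder endpoints would fail.
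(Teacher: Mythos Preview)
Your argument is correct, and the route differs genuinely from the paper's. The paper avoids the borderline case altogether: it builds a Moran construction with contraction ratios $c_m$ that increase \emph{strictly} towards the critical value $r=(k-2-4\varepsilon)/(2k-2-4\varepsilon)$, so at every level the central gap is strictly wider than the threshold in the accompanying lemma, and any $(k,\varepsilon)$-AP is forced, level by level, to live inside a single shrinking interval and hence to be a singleton. The dimension formula then comes out as the limit $\log 2/(-\log c_m)$.

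You instead work at the exact critical ratio, where the two key inequalities collapse to equalities, and then kill the equality case by deleting the countable set $E$ of cylinder endpoints. This is a nice alternative: it gives a strictly self-similar example (modulo a countable set) rather than a Moran set, and the dimension computation is immediate from the OSC together with $\dim_{\mathrm H}F=\dim_{\mathrm H}F_0$ and $\dim_{\mathrm H}F\le\dim_{\mathrm A}F\le\dim_{\mathrm A}F_0$. Two small points worth tightening: (i) ``the $b_i$ are distinct'' is not literally needed and may fail for $\varepsilon\ge 1/2$; what you actually use is that not all $b_i$ coincide, which follows from $k\ge 3$ and $\varepsilon<1$. (ii) The claim that the $b_i$ split as an initial block in $[0,r]$ followed by a terminal block in $[1-r,1]$ is not just ``maximality of $n$''; one also needs that no $b_i$ on the right precedes some $b_{i'}$ on the left, which follows since that would force $\Delta>(1-2r)/(2\varepsilon)>1/(k-1-2\varepsilon)$, contradicting your upper bound on $\Delta$. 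With these clarifications the proof goes through; in fact, since $r,1-r\in E$ you even get the strict inequality $b_{j+1}-b_j>1-2r$ directly, giving the contradiction without needing to pin $b_0$ to $0$.
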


We delay the proof of Theorem \ref{Th3} until Section \ref{proof3}.

\subsection{Related notions of almost arithmetic progressions}

There are other possible ways to define and study `almost arithmetic progressions'. For example Lafont-McReynolds  \cite{lm} used the following notion: 

\begin{defn}[\cite{lm}]
	Fix an integer $k\geq 3$ and $\varepsilon >0$.  A set $\{a_i\}_{i=0}^{k-1} \subset \mathbb{R}$ is a $k$-term $\varepsilon$-almost arithmetic progression if
	\[
\left\lvert \frac{a_{i+1}-a_{i}}{a_{j+1}-a_{j}}-1 \right\rvert <\varepsilon
	\]
for $i,j=0, \dots, k-2$.
\end{defn}

In this section we simply remark that our main result also yields similar estimates for this related notion.
\begin{lma}
	Fix an integer $k \geq 3$ and $\varepsilon\in (0,1/2)$. Any $(k,\varepsilon)$-AP is a $k$-term $\epsilon'$-almost arithmetic progression in the sense of Lafont-McReynolds, for any $\epsilon'> \frac{4\varepsilon}{1-2\varepsilon}$.
\end{lma}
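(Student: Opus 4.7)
The plan is to expand each point of the $(k,\varepsilon)$-AP as the corresponding point of a genuine AP plus a small error, and then estimate the consecutive differences explicitly. Concretely, write $b_i=a_i+e_i$ where $a_i=a_0+i\Delta$ is an arithmetic progression with gap $\Delta>0$ and $|e_i|\le \varepsilon\Delta$. Then the consecutive differences satisfy
\[
b_{i+1}-b_i=\Delta+(e_{i+1}-e_i),
\]
so $|(b_{i+1}-b_i)-\Delta|\le 2\varepsilon\Delta$. In particular, because we are assuming $\varepsilon<1/2$, each consecutive difference lies in the positive interval $[(1-2\varepsilon)\Delta,(1+2\varepsilon)\Delta]$.

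From here the Lafont--McReynolds ratio is controlled by a straightforward triangle inequality: for any $i,j\in\{0,\dots,k-2\}$,
\[
\bigl|(b_{i+1}-b_i)-(b_{j+1}-b_j)\bigr|\le 4\varepsilon\Delta,
\]
while $|b_{j+1}-b_j|\ge (1-2\varepsilon)\Delta>0$. Dividing gives
\[
\left|\frac{b_{i+1}-b_i}{b_{j+1}-b_j}-1\right|\le \frac{4\varepsilon}{1-2\varepsilon}.
\]
Since this is a weak inequality, any $\varepsilon'>\tfrac{4\varepsilon}{1-2\varepsilon}$ yields the required strict inequality in the Lafont--McReynolds definition.

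There is essentially no obstacle here; the assumption $\varepsilon<1/2$ is used precisely to ensure that $(1-2\varepsilon)\Delta>0$ so that the denominator cannot vanish or change sign, and the slight gap between $\le$ and $<$ is absorbed by taking $\varepsilon'$ strictly larger than $\tfrac{4\varepsilon}{1-2\varepsilon}$. The only sanity check worth recording is that this bound is independent of $k$ and of the particular pair $(i,j)$, which is what the definition requires.
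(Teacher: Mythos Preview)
Your proof is correct and follows essentially the same route as the paper: both arguments first observe that consecutive differences of a $(k,\varepsilon)$-AP lie in $[(1-2\varepsilon)\Delta,(1+2\varepsilon)\Delta]$ and then deduce the ratio bound. The only cosmetic difference is that the paper bounds the ratio directly as lying between $\tfrac{1-2\varepsilon}{1+2\varepsilon}$ and $\tfrac{1+2\varepsilon}{1-2\varepsilon}$, whereas you bound the numerator by $4\varepsilon\Delta$ via the triangle inequality before dividing; both yield the same estimate $\tfrac{4\varepsilon}{1-2\varepsilon}$.
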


\begin{proof}
	Let $\{a_i\}_{i=0}^{k-1}$ be a $(k,\varepsilon)$-AP. Then for some $\Delta>0$ we have
	\[
	(1-2\varepsilon)\Delta \leq |a_{i+1}-a_i| \leq (1+2\varepsilon)\Delta,
	\]
for all $ i\in\{0,2,\dots,k-2\}$.  Therefore
	\[
	-\epsilon' < \frac{-4\varepsilon}{1+2\varepsilon}=\frac{1-2\varepsilon}{1+2\varepsilon}-1\leq\frac{a_{i+1}-a_{i}}{a_{j+1}-a_{j}}-1\leq \frac{1+2\varepsilon}{1-2\varepsilon}-1=\frac{4\varepsilon}{1-2\varepsilon} < \epsilon'
	\]
as required.
\end{proof}

One can combine this lemma with Theorem \ref{Th1} to obtain upper bounds for the dimensions of sets which do not contain any $k$-term $\varepsilon$-almost arithmetic progression  in the sense of Lafont-McReynolds since such sets must not contain any $(k,\epsilon')$-APs in our sense for any $0< \epsilon' < \varepsilon/(4+2\varepsilon)$.  We leave the details to the reader.

\section{Proof of Theorem \ref{Th1}} \label{proof1}

Fix an integer $k \geq 3$ and $\varepsilon \in (0,1)$.  Let $0<r<R$ and consider an arbitrary closed interval $I \subseteq \mathbb{R}$ of length $R>0$.  Initially, we assume that $1/(2\varepsilon)$ is an integer. First partition the interval $I$ into precisely $k/(2\varepsilon)$ many smaller intervals with length $2\varepsilon R/k$ and enumerate the smaller intervals from left to right with indices $i\in A=\{1,2, \dots,k/(2\varepsilon)\}$. Partition the indices into disjoint sets $A_j$ for $j= 0,1,2, \dots ,1/(2\varepsilon)-1$ defined by
	\[
	A_j=\{i\in A \mid  i\equiv j \  (\mathrm{mod} \  1/(2\varepsilon))\},
	\]
	and note that $|A_j|=k$ for each $j$.  	Write $I_i$ for the $i$-th interval of length $2\varepsilon R/k$, and suppose there is a $j$ such that
	\[
	I_i\cap F\neq\emptyset
	\]
for all $ i\in A_j$.  Choose $b_i\in I_{i}\cap F$ to form the set $\{b_i\}_{i\in A_j}$. Consider the arithmetic progression $\{a_i\}_{i\in A_j}$ where $a_i$ is the midpoint of the interval $I_i$, and observe that for all $i\in A_j$ we have
	\[
	|b_i-a_i|\leq\frac{\varepsilon R}{k}.
	\]
	Therefore $\{a_i\}_{i\in A_j}$ is a $(k,\varepsilon)$-AP with gap $R/k$ that is contained in $F$. Since we assumed that $F$ does \emph{not} contain any $(k,\varepsilon)$-APs, we conclude that for all $j$ at least one of the intervals $I_i$ ($i \in A_j$) must not intersect $F$.  Therefore, at most
\[
 \frac{k-1}{2 \varepsilon}
\]
intervals $I_i$ of length $2\epsilon R/k$ intersect $F$.  We now repeat the above argument within each interval $I_i$ of length $2\varepsilon R/k$ which \emph{does} intersect $F$.  We find that there are at most
\[
\left(\frac{k-1}{2\varepsilon}\right)^2
\]
intervals of length $(2\varepsilon/k)^2 R$ intersecting $I \cap F$.   We repeat this process $m$ times where $m$ is chosen such that $(2\varepsilon/k)^m R \approx r$.  More precisely, let $m=\left\lceil\frac{\log r/R}{\log 2\varepsilon/k}\right\rceil$ and note that 
\[
	\frac{(2\varepsilon)^m}{k^m}R\leq r,
	\]
	and it follows that
	\[
	N(I\cap F,r) \  \leq \  N\left(I\cap F,\frac{(2\varepsilon)^m}{k^m}R \right) \ \leq  \ \left(\frac{k-1}{2\varepsilon}\right)^m.
	\]
	For any $\delta>0$ we have
	\[
	m = \left\lceil\frac{\log r/R}{\log 2\varepsilon/k}\right\rceil\leq (1+\delta)\frac{\log R/r}{\log k/(2\varepsilon)}.
	\] 
provided $R/r$ is sufficiently large, which we may assume.  	Therefore, for any $x \in F$,
\[
	N(B(x,R)\cap F,r) \  \leq \ 2 \left(\frac{k-1}{2\varepsilon}\right)^m \  \leq  \ 2 \left(\frac{k-1}{2\varepsilon}\right)^{ (1+\delta)\frac{\log R/r}{\log k/(2\varepsilon)}}  \ = \ 2\left(\frac{R}{r}\right)^{ (1+\delta)\frac{\log (k-1)/(2\varepsilon)}{\log k/(2\varepsilon)}}.
	\]
It follows that

	\[
	\Assouad F \ \leq \  (1+\delta)\frac{\log (k-1)/(2\varepsilon)}{\log k/(2\varepsilon)}  = 1+\delta+\frac{\log(1-1/k)}{\log k/(2\varepsilon)},
	\]
	and as $\delta$ can be chosen arbitrarily close to $0$ we have
	\[
	\Assouad F\leq 1+\frac{\log(1-1/k)}{\log k/(2\varepsilon)}
	\]
as required.  If  $1/(2\varepsilon)$ is not an integer, then we can simply replace $\varepsilon$ by 
\[
\varepsilon' = \frac{1}{2\lceil 1/(2\varepsilon)\rceil}.
\]
 Observe that, since $\varepsilon' \leq \varepsilon$, if  $F$ does not contain any $(k,\varepsilon)$-APs, then it certainly does not contain any  $(k, \varepsilon')$-APs  the desired estimate follows.

\section{Proof of Theorem \ref{Th3}}  \label{proof3}

Before constructing the required examples, we prove a simple technical lemma.

\begin{lma}\label{Lem1}
Fix an integer $k\geq 3$ and $\varepsilon$ satisfying $0<\epsilon< (k-2)/4$.  Let $I\subset \mathbb{R}$ be a closed interval of length $|I|$, and let $J \subset I$ be an open interval of length $|J|$ satisfying
\[
 \frac{|I|(1+2 \epsilon)}{k-1-2\epsilon} < |J| <  |I|.
\]
 If $I \setminus J$ contains a $(k,\varepsilon)$-AP, then it must lie entirely to the left of $J$ or entirely to the right. 
\end{lma}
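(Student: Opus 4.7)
The plan is to argue by contradiction: assume a $(k,\varepsilon)$-AP $\{b_i\}_{i=0}^{k-1}$ lies in $I\setminus J$ but has points on both sides of $J$, and show that $|J|$ must then violate the lower bound in the hypothesis.

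The first step is to establish monotonicity of the approximating points. If $\{a_i\}_{i=0}^{k-1}$ is the associated true AP with gap $\Delta>0$, then $|a_{i+1}-a_i|=\Delta$ while $|b_i-a_i|\leq \varepsilon\Delta$, so after orienting the AP so that it is increasing we have
\[
 b_{i+1}-b_i \in \bigl[(1-2\varepsilon)\Delta,\,(1+2\varepsilon)\Delta\bigr].
\]
The assumption $\varepsilon<(k-2)/4$ implies in particular $\varepsilon<1/2$, so the lower bound here is strictly positive and $\{b_i\}$ is strictly increasing.

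From monotonicity and the assumption that $\{b_i\}$ has points on both sides of $J$, there must exist an index $j$ with $b_j$ lying to the left of $J$ and $b_{j+1}$ lying to the right. This forces $b_{j+1}-b_j\geq|J|$ and hence $|J|\leq(1+2\varepsilon)\Delta$. On the other hand, since $b_0, b_{k-1}\in I$,
\[
 |I|\geq b_{k-1}-b_0 \geq (k-1)\Delta - 2\varepsilon\Delta = (k-1-2\varepsilon)\Delta,
\]
which gives $\Delta\leq|I|/(k-1-2\varepsilon)$. Combining the two bounds yields $|J|\leq(1+2\varepsilon)|I|/(k-1-2\varepsilon)$, contradicting the hypothesis. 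The only mildly subtle point is the monotonicity step, where one must unpack the definition of $(k,\varepsilon)$-AP; the rest is a direct calculation with the triangle inequality, and the numerical hypothesis $\varepsilon<(k-2)/4$ enters only to guarantee $k-1-2\varepsilon>0$ and $\varepsilon<1/2$.
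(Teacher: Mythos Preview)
Your argument is correct and follows the same approach as the paper: bound $\Delta$ from above via $(k-1-2\varepsilon)\Delta\le |I|$, bound $|J|$ from above via $(1+2\varepsilon)\Delta\ge |J|$ because some consecutive pair must straddle the gap, and combine. You have simply made explicit the monotonicity step that the paper's proof leaves implicit in the phrase ``one of the gaps must bridge the hole''.
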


\begin{proof}
Suppose $I \setminus J$ contains a $(k,\varepsilon)$-AP with associated gap length $\Delta>0$.  It follows that
\[
(k-1-2\varepsilon) \Delta \leq |I|.
\]
Suppose also that this $(k,\varepsilon)$-AP intersects $I$ on both sides of $J$.  This means that one of the gaps much `bridge the hole' $J$ and so
\[
\Delta(1+2 \epsilon) \geq |J| >  \frac{|I|(1+2 \epsilon)}{k-1-2\epsilon}.
\]
Combining these estimates yields the desired contradiction.
\end{proof}

We are now able to construct sets satisfying the requirements of Theorem \ref{Th3}.   Fix an integer $k\geq 3$, a real number $\varepsilon$ satisfying $0<\epsilon< \min\{1,(k-2)/4\}$, and a sequence of increasing real numbers $c_m$ ($m \geq 1$) satisfying
\[
0<c_m  \nearrow   \frac{k-2-4 \epsilon}{2k-2-4\epsilon}.
\]
We construct $F$ via an iterative procedure.  Let $F_0 = [0,1]$ and for $m \geq 1$, let
\[
F_m = c_mF_{m-1} \cup  \left(c_m F_{m-1} +1-c_m \right).
\]
In particular, $F_m$ is a collection of $2^m$ closed intervals of length $c_1 c_2 \cdots c_m$.  Finally, let
\[
F = \bigcap_{m=0}^\infty F_m.
\]
We claim that $F$ cannot contain any $(k,\varepsilon)$-APs, due to Lemma \ref{Lem1}.  Indeed, suppose to the contrary and observe that each interval $I$ at stage $m \geq 0$ in the construction  splits up into two smaller intervals $I_1 \cup I_2$ at the next level where the `hole' $J= I \setminus (I_1 \cup I_2)$ has length
\[
|J| = |I|(1-2c_{m+1}) > \frac{|I|(1+2 \epsilon)}{k-1-2\epsilon}.
\]
Therefore, if a $(k,\varepsilon)$-AP is contained in $I$ then it is entirely contained inside either $I_1$ or $I_2$ by Lemma \ref{Lem1}.  By induction we conclude that any  $(k,\varepsilon)$-AP is a singleton, which is a contradiction.  Moreover, an elementary calculation which we omit yields
\[
\dim_\mathrm{A} F = \dim_\mathrm{H} F = \lim_{m \to \infty} \frac{\log 2}{-\log c_m} =  \frac{\log 2}{\log \frac{2k-2-4\epsilon}{k-2-4 \epsilon}}
\]
as required. Alternatively, $F$ can be viewed as a Moran construction and the given formula for the dimension is well-known.

\section{Higher dimensional analogues and discrete Kakeya problems}

In this section we consider an analogous problem in  higher dimensions. The proofs are similar to those presented for subsets of the line and so we omit most of the details. We consider subsets of $\mathbb{R}^d$ for an integer $d \geq 1$ and we replace `arithmetic progressions' with `arithmetic patches lying in particular subspaces'.  More precisely, let  $1 \leq m \leq d$ be an integer and let $ \mathbf{e} =  \{e_1, \dots, e_m\}$ be a set of orthogonal unit vectors.  We say $P \subseteq \mathbb{R}^d$ is an \emph{arithmetic patch with orientation $\mathbf{e}$, and of size $k$}, if there exists a `gap length' $\Delta >  0$ such that
\[
P = \left\{ t +   \Delta  \sum_{i=1}^m  x_i e_i  \ : \ x_i =0, \dots, k-1   \right\}
\]
for some $t \in \mathbb{R}^d$.  In particular, an arithmetic patch is a lattice consisting of $k^m$ points lying in a hyperplane parallel to the subspace spanned by $\mathbf{e}$.  Finally, for an integer $k \geq 2$, $\varepsilon \geq 0$, and an orientation $\mathbf{e}$,  we say $F \subseteq \mathbb{R}^d$ contains a $(k, \varepsilon, \mathbf{e})$-AP  if there exists an arithmetic patch $P$ with orientation $\mathbf{e}$,  size $k$, and gap length $\Delta >  0$ such that
\[
\sup_{x \in P} \inf_{y \in F} |x-y| \leq \epsilon \Delta.
\]

\begin{thm} \label{higherdim}
	Let   $m$ and $d$ be integers with $1 \leq m\leq d$,  $k \geq 2$ be an integer, and $\epsilon \in (0,1/\sqrt{d})$.  If  $F\subset\mathbb{R}^d$ is such that there exists an orientation $ \mathbf{e} =  \{e_1, \dots, e_m\}$  such that $F$ does not contain any  $(k, \varepsilon, \mathbf{e})$-APs, then
	\[
	\Assouad F\leq d+\frac{\log (1-1/k^m)}{\log k/\lceil \sqrt{d}/2\varepsilon\rceil}.
	\]
\end{thm}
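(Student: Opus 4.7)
My plan is to adapt the proof of Theorem \ref{Th1} to $d$ dimensions by replacing the partition of an interval by a partition of a cube into a grid aligned with an orthonormal basis extending $\mathbf{e}$. The factor $\sqrt{d}$ in the statement is exactly the diameter-to-side ratio of a $d$-cube, and appears because the $\varepsilon\Delta$ tolerance is measured in the Euclidean norm while the grid is naturally built out of $\ell^\infty$-balls.

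In detail, I would extend $\{e_1,\ldots,e_m\}$ to an orthonormal basis $\{e_1,\ldots,e_d\}$ of $\mathbb{R}^d$. Fixing $x \in F$ and $0 < r < R$, enclose $B(x,R)$ in a cube $Q$ of side $2R$ aligned with this basis. Assuming first that $\sqrt{d}/(2\varepsilon)$ is an integer, set $N = k\sqrt{d}/(2\varepsilon)$ and partition $Q$ into $N^d$ sub-cubes of side $2R/N$, indexed by $(n_1,\ldots,n_d) \in \{0,\ldots,N-1\}^d$. Group the sub-cubes into families of size $k^m$, each family specified by offsets $(p_1,\ldots,p_m) \in \{0,\ldots,N/k-1\}^m$ and transverse coordinates $(n_{m+1},\ldots,n_d) \in \{0,\ldots,N-1\}^{d-m}$, consisting of the cubes with $n_i = p_i + \ell_i (N/k)$ for $i \leq m$ and $(\ell_1,\ldots,\ell_m) \in \{0,\ldots,k-1\}^m$. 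The centres of any such family form an arithmetic patch with orientation $\mathbf{e}$ and gap length $\Delta = 2R/k$, and any point of a sub-cube lies within Euclidean distance $R\sqrt{d}/N = 2\varepsilon R/k = \varepsilon\Delta$ of the centre of that sub-cube.

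Consequently, if all $k^m$ sub-cubes of some family met $F$, choosing one point of $F$ in each would exhibit a $(k,\varepsilon,\mathbf{e})$-AP inside $F$, contradicting the hypothesis. Since the families partition the collection of $N^d$ sub-cubes into $N^d/k^m$ groups of size $k^m$, at most $(1-1/k^m)N^d$ sub-cubes meet $F$. Iterating inside each surviving sub-cube $\ell \approx \log(R/r)/\log N$ times gives $N(B(x,R)\cap F,r) \lesssim ((1-1/k^m)N^d)^\ell$, and taking logarithms yields
\[
\Assouad F \ \leq \ d + \frac{\log(1-1/k^m)}{\log N} \ = \ d + \frac{\log(1-1/k^m)}{\log\bigl(k\sqrt{d}/(2\varepsilon)\bigr)}.
\]
If $\sqrt{d}/(2\varepsilon)$ is not an integer, the same argument applies after replacing $\varepsilon$ by $\varepsilon' = \sqrt{d}/(2\lceil\sqrt{d}/(2\varepsilon)\rceil) \leq \varepsilon$, exactly as at the end of the proof of Theorem \ref{Th1}, since the hypothesis for $\varepsilon$ implies the hypothesis for $\varepsilon'$.

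The main point requiring care is the geometric calibration in the second step: the grid must be chosen so that the diameter-to-gap ratio is precisely $2\varepsilon$, which is what forces $N = k\sqrt{d}/(2\varepsilon)$ and is the source of the $\sqrt{d}$ in the final logarithm. Checking that the families genuinely partition the sub-cubes with quotient $k^m$, and that the transversal really forms a $(k,\varepsilon,\mathbf{e})$-AP rather than just a nearby cluster of points, are the only places one has to be attentive; everything else is the same bookkeeping as in the one-dimensional proof.
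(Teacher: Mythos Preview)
Your proposal is correct and follows essentially the same approach as the paper: both arguments partition a cube aligned with (an extension of) $\mathbf{e}$ into $\bigl(k\sqrt{d}/(2\varepsilon)\bigr)^d$ sub-cubes, group them into $k^m$-element families whose centres form arithmetic patches with gap $\Delta$ and half-diagonal $\varepsilon\Delta$, delete one cube per family, and iterate. The only cosmetic differences are that the paper works directly with a cube of side $R$ rather than enclosing a ball, and it phrases the grouping in terms of ``faces'' and ``collections'' rather than offsets and transverse coordinates.
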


\begin{proof}
	For simplicity of exposition, assume that $\mathbf{e}$ consists of the first $m$ elements in the standard basis for $\mathbb{R}^d$.  Assume $\sqrt{d}/(2\varepsilon)$ is an integer and let $0<r<R$.  Instead of an interval of length $R$, we consider a cube $Q$ of side length $R$ oriented with the coordinate axes.  We then decompose $Q$ into smaller cubes of side length $2\varepsilon R/(k \sqrt{d})$. There are $(k\sqrt{d}/(2\varepsilon))^d$ many of them and we  label them according to the lattice
	\[
	A^d=\left\{(z_1,z_2,\dots,z_d)\in\mathbb{Z}^d \ : \  1 \leq z_i  \leq k\sqrt{d}/(2\varepsilon) \right\}.
	\]
We now consider the `faces' parallel to the subspace spanned by $\mathbf{e}$.  In particular, we decompose the collection of  $(k\sqrt{d}/(2\varepsilon))^d$ smaller cubes into $(k\sqrt{d}/(2\varepsilon))^{d-m}$ faces each consisting of the $(k\sqrt{d}/(2\varepsilon))^m$ smaller cubes which share a particular common labeling in the final $(d-m)$ coordinates.

For each such face we   perform a  deleting procedure analogous to that used in the proof of Theorem \ref{Th1}.  Each face partitions into $(\sqrt{d}/(2\varepsilon))^m$ many `collections' which mimic $(k, \varepsilon, \mathbf{e})$-APs with `gap length' $R/k$.  Since the maximum distance from the centre of each cube to a point on the boundary is $\epsilon R/k$ and we assume $F$ does not contain any  $(k, \varepsilon, \mathbf{e})$-APs we can remove $(\sqrt{d}/(2\varepsilon))^m$ cubes from each of the faces (one from each `collection').  This means that at most
\[
\left(\frac{k\sqrt{d}}{2\varepsilon}\right)^d - \left(\frac{\sqrt{d}}{2\varepsilon}\right)^m  \cdot \left(\frac{k\sqrt{d}}{2\varepsilon} \right)^{d-m}  = \left(\frac{k\sqrt{d}}{2\varepsilon}\right)^d \left( 1 - \frac{1}{k^m}\right)
\]
of the smaller cubes can intersect $F \cap Q$.  Iterating this procedure within each cube which does intersect $F \cap Q$ as before yields the desired result.  
\end{proof}

We conclude by stating a simple corollary to Theorem \ref{higherdim}, which could be considered a discretised version of a `reverse Kakeya problem'.  The Kakeya problem is to prove that if a set $K \subseteq \mathbb{R}^d$ contains a unit line segment in every direction then it necessarily has Hausdorff dimension $d$.  Here we replace a unit line segment in a particular direction $e \in S^{d-1}$ by  a $(k, \varepsilon, \{e\})$-AP, i.e. an approximate arithmetic progression in direction $e$.  Our result then says that if a set has sufficiently large Assouad dimension, then it must contain an approximate arithmetic progression in every direction.  
\begin{cor} \label{kakeya}
	Let  $k \geq 2$ be an integer, and $\epsilon \in (0,1/\sqrt{d})$. If $F \subseteq \mathbb{R}^d$ and 
\[
	\Assouad F> d+\frac{\log (1-1/k)}{\log k/\lceil \sqrt{d}/2\varepsilon\rceil},
	\]
then $F$ contains a $(k, \varepsilon, \{e\})$-AP for every direction $e \in S^{d-1}$.
\end{cor}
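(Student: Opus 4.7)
The plan is to obtain this corollary as an immediate contrapositive of Theorem \ref{higherdim} applied in the special case $m = 1$. When $m=1$, an orientation $\mathbf{e} = \{e_1\}$ is nothing more than a single unit vector $e \in S^{d-1}$, so the hypothesis ``$F$ does not contain any $(k, \varepsilon, \mathbf{e})$-APs'' becomes the assertion that $F$ avoids every $(k, \varepsilon, \{e\})$-AP in the specific direction $e$, matching the formulation used in the corollary.

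First I would specialise the Assouad bound in Theorem \ref{higherdim} to $m=1$. Plugging $m=1$ into $\log(1 - 1/k^m)$ yields $\log(1 - 1/k)$, while the denominator $\log k/\lceil \sqrt{d}/2\varepsilon \rceil$ is independent of $m$, so the upper bound produced by Theorem \ref{higherdim} in this case coincides exactly with the threshold appearing in the corollary's hypothesis. Next I would argue by contraposition: assume, for contradiction, that there is some direction $e \in S^{d-1}$ for which $F$ contains no $(k, \varepsilon, \{e\})$-AP. Then Theorem \ref{higherdim} (with $\mathbf{e} = \{e\}$) applies and forces
\[
\Assouad F \leq d + \frac{\log(1 - 1/k)}{\log k/\lceil \sqrt{d}/2\varepsilon \rceil},
\]
contradicting the standing hypothesis. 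Hence every direction $e \in S^{d-1}$ must witness at least one $(k, \varepsilon, \{e\})$-AP inside $F$.

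There is no substantive obstacle here; the entire content has already been absorbed into Theorem \ref{higherdim}, and the corollary is merely a reinterpretation of its $m=1$ instance as a ``reverse Kakeya'' statement. The only mild point worth mentioning in the write-up is that the range $\varepsilon \in (0, 1/\sqrt{d})$ and the definition of $(k, \varepsilon, \{e\})$-AP coincide precisely with those used in Theorem \ref{higherdim}, so no additional hypotheses are required.
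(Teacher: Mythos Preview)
Your proposal is correct and matches the paper's approach exactly: the paper presents Corollary~\ref{kakeya} as an immediate consequence of Theorem~\ref{higherdim} without giving a separate proof, and your contrapositive argument with $m=1$ is precisely the intended reading.
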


\section{Future work and open questions} \label{discussion}

Theorem \ref{Th3} shows that Theorem \ref{Th1} is sharp in the sense that for a fixed $\epsilon \in (0,1)$, 
\[
\lim_{k\to \infty } \sup \{ \,  \dim_\mathrm{H} F \, : \, \text{$F \subset \mathbb{R}$ does not contain any $(k, \epsilon)$-APs} \} = 1.
\]
However, the following question is left as an interesting problem:

\begin{ques}
What is the value
\[
\lim_{\epsilon \to 0} \sup \{ \,  \dim F \, : \, \textup{$F \subset \mathbb{R}$ does not contain any $(k, \epsilon)$-APs} \}
\]
where $k \geq 3$ is fixed and $\dim$ is the Hausdorff dimension or the Assouad dimension?
\end{ques}
It follows from Theorem \ref{Th1} and Theorem \ref{Th3} that answer to the above question is bounded below by
\[
\frac{\log 2}{ \log \frac{2k-2}{k-2}}
\]
and above by 1.  It appears that this is related to the following problem in additive combinatorics.  Let $k \geq 3$ be an integer, and let $r_k(N)$ denote the largest cardinality of a set $A\subset \{1,\dots, N\}$ which does not contain any arithmetic progressions of length $k$. A very challenging problem is to estimate $r_k(N)$, and so far the best lower bound for general $k$ is given by O'Bryant \cite[Corollary 1]{bk} and for the best upper bound see Gowers \cite[Theorem 18.2]{gowers}.   In particular, the known bounds are good enough to conclude that $\log r_k( N)/\log N \to 1$ as $N \to \infty$.  For $\epsilon \in (0,1)$, let $r_k(\epsilon,N)$ denote the largest cardinality of a set $A\subset \{1,\dots, N\}$ which does not contain any $(k,\epsilon)$-AP. Clearly, we have $ r_k(\epsilon,N)\leq r_k(N)$ and the results of this paper imply that
 \[
\frac{\log 2}{\log \frac{2k-2-4\epsilon}{k-2-4 \epsilon}} \leq \limsup_{N\to\infty}\frac{\log r_k(\epsilon,N)}{\log N}\leq 1+\frac{\log (1-1/k)}{\log k \lceil 1/(2\varepsilon)\rceil}.
 \]
and it seems to be an interesting question to compute the precise value of this limit or to consider the more general problem of finding (sharp) bounds for $r_k(\epsilon,N)$.

Motivated by Corollary \ref{kakeya}, we also pose the following discrete analogue of the Kakeya problem:

\begin{ques}
Let $\epsilon \in (0,1)$ and suppose $F \subseteq \mathbb{R}^d$ contains a $(k, \varepsilon, \{e\})$-AP for every direction $e \in S^{d-1}$ and every $k \geq 3$.  Is it true that the Assouad dimension of $F$ is necessarily equal to $d$?
\end{ques}

A positive answer to this question would imply that every Kakeya set has Assouad dimension $d$ and Theorem \ref{higherdim} implies that the converse of this theorem is true, i.e. a set $F \subseteq \mathbb{R}^d$ with Assouad dimension $d$ necessarily contains a $(k, \varepsilon, \{e\})$-AP for every direction $e \in S^{d-1}$ and every $k \geq 3$.  Finally, we note that arithmetic progressions have been connected with the Kakeya problem before.  For example, Bourgain proved that if a set $F \subseteq \mathbb{R}^d$ contains a $(3, 0, \{e\})$-AP for every direction $e \in S^{d-1}$, then the box dimension of $F$ is at least $\frac{13}{25}(d-1)$, see \cite[Proposition 1.7]{bourgain}.

\end{document}